%
%
%
\documentclass{amsart}
\usepackage{amssymb}
\usepackage{color}

\newtheorem{theorem}{Theorem}[section]
\newtheorem{corollary}{Corollary}[section]

\newtheorem{lemma}[theorem]{Lemma}

\theoremstyle{definition}

\newtheorem{remark}{Remark}[section]

 \numberwithin{equation}{section}
\begin{document}
\title{On existence of PI-exponents of unital algebras}
\author[D.D. Repov\v s and M.V. Zaicev]
{Du\v san D. Repov\v s and Mikhail V. Zaicev}
\address{Du\v san D. Repov\v s \\Faculty of Education, and
Faculty of  Mathematics and Physics, University of Ljubljana \&
Institute of mathematics, Physics  and Mechanics, Ljubljana, 1000, Slovenia}
\email{dusan.repovs@guest.arnes.si}
\address{Mikhail V. Zaicev \\Department of Algebra\\ Faculty of Mathematics and
Mechanics\\  Moscow State University \\ Moscow,119992, Russia}
\email{zaicevmv@mail.ru}
\thanks{The first author was supported by the Slovenian Research Agency
grants P1-0292, J1-8131, N1-0114, N1-0083, and N1-0064. 
The second author was supported by the Russian Science Foundation, 
grant 16-11-10013$\Pi$}
\subjclass[2010]{Primary 16R10; Secondary 16P90}
\keywords{Polynomial identities, exponential codimension growth}

\begin{abstract} We construct a family of unital  non-associative algebras
$\{T_\alpha\vert~ 2<\alpha\in\mathbb R\}$ such that $\underline{exp}(T_\alpha)=2$,
whereas $\alpha\le\overline{exp}(T_\alpha)\le\alpha+1$. In particular, it follows 
that ordinary PI-exponent of codimension growth of algebra $T_\alpha$  does not exist
for any $\alpha> 2$. This is the first example of a unital
algebra whose PI-exponent does not exist.
\end{abstract}

\maketitle

\section{Introduction}
We consider numerical invariants associated with polynomial identities
of algebras over a field of characteristic zero. Given an algebra $A$,
one can construct a sequence of non-negative integers $\{c_n(A)\},
n=1,2,\ldots$, called the {\it codimensions} of $A$, which is an important 
numerical characteristic of identical relations of $A$. In general, the
sequence $\{c_n(A)\}$ grows faster than $n!$. However, there is a wide class of 
algebras with exponentially bounded codimension growth. This class includes
all associative PI-algebras \cite{r2}, all finite-dimensional algebras
\cite{r2}, Kac-Moody algebras \cite{r3}, infinite-dimensional simple Lie
algebras of Cartan type \cite{r4}, and many others. If the sequence
$\{c_n(A)\}$ is exponentially bounded then the following natural question arises:
does the limit
\begin{equation}\label{eqi1}
\lim_{n\rightarrow\infty}\root n \of{c_n(A)}
\end{equation}
exist and what are its possible values? In case of existence, the limit
(\ref{eqi1}) is called the PI-{\it exponent} {\it of} $A$, denoted as $exp(A)$. At the end of 1980's, Amitsur
conjectured that for any associative PI-algebra, the limit (\ref{eqi1})
exists and is a non-negative integer. Amitsur's conjecture was confirmed
in \cite{r5,r6}. Later, Amitsur's conjecture was also confirmed for
finite-dimensional Lie and Jordan algebras \cite{r8,r7}. Existence 
of $exp(A)$ was also proved for all finite-dimensional simple algebras \cite{r9}
and many others.

Nevertheless, the answer to Amitsur's question in the general case  is negative:
 a counterexample was presented in \cite{r18}. Namely, for any real
$\alpha>1$, an algebra $R_\alpha$ was constructed such that the lower
limit of $\root n \of{c_n(A)}$ is equal to $1$,  whereas the upper limit is equal 
to $\alpha$. It now looks natural to describe classes of algebras in which
for any algebra $A$, its PI-exponent $exp(A)$ exists. One of the candidates
is the class of all finite-dimensional algebras. Another one is the class of
so-called special Lie algebras. The next interesting class consists of unital
algebras, it contains in particular, all algebras with an external unit. Given
an algebra $A$, we denote by $A^\sharp$ the algebra obtained from $A$ by
adjoining the external unit. There is a number of papers where the existence
of $exp(A^\sharp)$ has been proved, provided that $exp(A)$ exists \cite{r12,r13,
r11}. Moreover, in all these cases, $exp(A^\sharp)=exp(A)+1$.

The main goal of the present paper is to construct a series of unital algebras
such that $exp(A)$ does not exist, although the sequence $\{c_n(A)$ is exponentially
bounded (see Theorem \ref{t1} and Corollary \ref{c1} below). All details about polynomial
identities and their numerical characteristics can be found in \cite{r14,r15,r16}.

\section{Definitions and preliminary structures}

Let $A$ be an algebra over a field $F$ and let $F\{X\}$ be a free $F$-algebra with
an infinite set $X$ of free generators. The set $Id(A)\subset F\{X\}$ of all
identities of $A$ forms an ideal of $F\{X\}$. Denote by $P_n=P_n(x_1,\ldots,x_n)$ the
subspace of $F\{X\}$ of all multilinear polynomials on $x_1,\ldots,x_n\in X$. Then
$P_n\cap Id(A)$ is actually the set of all multilinear identities of $A$ of degree $n$.
An important numerical characteristic of $Id(A)$ is the sequence of non-negative integers
$\{c_n(A)\},n=1,2,\ldots~,$ where
$$
c_n(A)=\dim\frac{P_n}{P_n\cap Id(A)}.
$$
If the sequence $\{c_n(A)\}$ is exponentially bounded, then the lower and the upper
PI-exponents of $A$, defined as follows
$$
\underline{exp}(A)=\liminf_{n\rightarrow\infty} \root n \of{c_n(A)}, \quad
\overline{exp}(A)=\limsup_{n\rightarrow\infty} \root n \of{c_n(A)},
$$
are well-defined. An existence of ordinary PI-exponent (\ref{eqi1}) is equivalent to the
equality $\underline{exp}(A)=\overline{exp}(A)$.

In  \cite{r18}, an algebra $R=R(\alpha)$ such that
$\underline{exp}(R)=1,\overline{exp}(R)=\alpha$, was constructed for any real $\alpha>0$. Slightly modifying the
construction from \cite{r18}, we want to get for any real $\alpha>2$, an algebra $R_\alpha$
with $\underline{exp}(R_\alpha)^\sharp=2$ and $\alpha\le \overline{exp}(R^\sharp)\le\alpha+1$. 

Clearly, polynomial identities of $A^\sharp$ strongly depend on the identities of $A$.
In particular, we make the following observation. Note that if $f=f(x_1,\ldots, x_n)$ is
a multilinear polynomial from $F\{X\}$ then $f(1+x_1,\ldots,1+x_n)\in F\{X\}^\sharp$ is
the sum
\begin{equation}\label{eq1}
f=\sum f_{i_1,\ldots,i_k},\quad \{i_1,\ldots,i_k\}\subseteq\{1,\ldots,n\},~
0\le k\le n,
\end{equation}
where $f_{i_1,\ldots,i_k}$ is a multilinear polynomial on
$x_{i_1},\ldots,x_{i_k}$ obtained from $f$ by replacing all $x_j,j\ne i_1,\ldots,i_k$
with $1$.

\begin{remark}\label{rm1}
A multilinear polynomial $f=f(x_1,\ldots,x_n)$ is an identity of $A^\sharp$ if and only
if all of its components $f_{i_1,\ldots,i_k}$ on the left hand side of (\ref{eq1}) are
identities of $A$.
\end{remark}

 The next statement
  easily follows
  from Remark \ref{rm1}.

\begin{remark}\label{rm2}
Suppose that an algebra $A$ satisfies all multilinear identities of an algebra $B$ of
degree $\deg f=k\le n$ for some fixed $n$. Then $A^\sharp$ satisfies all identities
of $B^\sharp$ of degree $k\le n$.
\end{remark}

Using results of \cite{r17}, we obtain the following inequalities.

\begin{lemma}\label{l1} (\cite[Theorem 2]{r17})
Let $A$ be an algebra with an exponentally bounded codimension growth. Then
$\overline{exp}(A^\sharp)\le \overline{exp}(A)+1$.\hfill $\Box$
\end{lemma}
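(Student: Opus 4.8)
The plan is to reduce the lemma to the combinatorial inequality
\[
c_n(A^\sharp)\ \le\ \sum_{k=0}^{n}\binom{n}{k}\,c_k(A)\qquad(\text{where we set }c_0(A):=1),
\]
and then to extract the exponential bound from it by the binomial theorem.

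For the inequality I would use Remark~\ref{rm1}. For a non-empty $S=\{i_1,\dots,i_k\}\subseteq\{1,\dots,n\}$ let $P_S\subseteq P_n$ be the span of the multilinear monomials in the variables $\{x_i:i\in S\}$; relabelling variables identifies $P_S$ with $P_k$, and since $Id(A)$ is a T-ideal this identification carries $P_S\cap Id(A)$ onto $P_k\cap Id(A)$, so $\dim P_S/(P_S\cap Id(A))=c_k(A)$. Consider the linear map
\[
\Phi\colon P_n\ \longrightarrow\ F\ \oplus\ \bigoplus_{\emptyset\ne S\subseteq\{1,\dots,n\}}\frac{P_S}{P_S\cap Id(A)},\qquad f\ \longmapsto\ \Bigl(f_\emptyset,\ \bigl(f_S+(P_S\cap Id(A))\bigr)_{S}\Bigr),
\]
where $f_S=f_{i_1,\dots,i_k}$ is the component of $f(1+x_1,\dots,1+x_n)$ appearing in (\ref{eq1}) and $f_\emptyset$ is identified with its scalar. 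Then $f\in\ker\Phi$ means that $f_\emptyset=0$ and $f_S\in Id(A)$ for every $S\ne\emptyset$, which by Remark~\ref{rm1} is equivalent to $f\in P_n\cap Id(A^\sharp)$. Hence $\Phi$ induces an embedding of $P_n/(P_n\cap Id(A^\sharp))$ into its target space, and comparing dimensions yields the displayed inequality.

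The rest is a routine estimate. Put $d=\overline{exp}(A)$ and fix $\ep>0$. By the definition of the upper limit there is a $k_0$ with $c_k(A)\le(d+\ep)^k$ for all $k\ge k_0$; letting $M=\max_{0\le k<k_0}c_k(A)$ and bounding the finitely many low-degree terms crudely, the inequality above gives
\[
c_n(A^\sharp)\ \le\ M\sum_{k<k_0}\binom{n}{k}\ +\ \sum_{k=0}^{n}\binom{n}{k}(d+\ep)^k\ \le\ M\,k_0\,n^{k_0}\ +\ (1+d+\ep)^n .
\]
Since $1+d+\ep>1$, the polynomial term is negligible against $(1+d+\ep)^n$, so $\limsup_n\root n\of{c_n(A^\sharp)}\le 1+d+\ep$; letting $\ep\to0$ yields $\overline{exp}(A^\sharp)\le\overline{exp}(A)+1$.

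I do not anticipate a genuine obstacle. The only delicate point is the precise identification of $\ker\Phi$ (in particular keeping track of the scalar term $f_\emptyset$), and this is exactly what Remark~\ref{rm1} supplies; everything downstream of the inequality $c_n(A^\sharp)\le\sum_{k}\binom{n}{k}c_k(A)$ is elementary.
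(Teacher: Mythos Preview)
Your argument is correct. The paper itself does not prove this lemma --- it simply cites \cite[Theorem~2]{r17} --- so there is no in-paper proof to compare against. That said, the inequality $c_n(A^\sharp)\le\sum_{k=0}^{n}\binom{n}{k}c_k(A)$ that you derive from Remark~\ref{rm1} is exactly the one the authors invoke later in the proof of Theorem~\ref{t1}, so your approach is fully in line with the paper's own toolkit; the subsequent binomial-theorem estimate is the natural (and presumably the intended) way to finish.

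One small notational slip: you write $P_S\subseteq P_n$, but for $|S|<n$ the space $P_S$ of multilinear polynomials in the variables $\{x_i:i\in S\}$ is not contained in $P_n$ (elements of $P_n$ involve all $n$ variables). This is harmless --- just drop ``$\subseteq P_n$'' and regard each $P_S$ as a subspace of $F\{X\}$; the map $\Phi$ and the kernel computation go through unchanged.
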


\begin{lemma}\label{l2} (\cite[Theorem 3]{r17})
Let $A$ be an algebra with an exponentally bounded codimension growth satisfying the
identity (\ref{eq2}). Then $\underline{exp}(A^\sharp)\ge \underline{exp}(A)+1$.\hfill $\Box$
\end{lemma}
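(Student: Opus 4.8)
\noindent\textit{Proof proposal.} The plan is to reduce the inequality to a single codimension estimate of the form
\[
c_n(A^\sharp)\ \ge\ \binom{n}{k}\,c_k(A)\qquad\text{for every }k\le n,
\]
and then finish by a routine asymptotic computation. Write $d=\underline{exp}(A)$ and assume $d\ge 1$ (otherwise the claim $\underline{exp}(A^\sharp)\ge 1$ is immediate). Fix $\ep$ with $0<\ep<d$. By definition of $\liminf$ there is $K=K(\ep)$ with $c_k(A)\ge (d-\ep)^k$ for all $k\ge K$. Given a large $n$, choose $k=k(n)\ge K$ maximizing $\binom{n}{k}(d-\ep)^k$. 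Since $\sum_{k=0}^n\binom{n}{k}(d-\ep)^k=(1+d-\ep)^n$, the $n+1$ summands cannot all be small, and the summands with $k<K$ contribute only polynomially in $n$; hence $\binom{n}{k(n)}(d-\ep)^{k(n)}\ge (1+d-\ep)^n/\bigl(2(n+1)\bigr)$ for all large $n$. Plugging this into the estimate gives $c_n(A^\sharp)\ge (1+d-\ep)^n/\bigl(2(n+1)\bigr)$, so $\liminf_n\sqrt[n]{c_n(A^\sharp)}\ge 1+d-\ep$, and letting $\ep\to 0$ yields the lemma. Everything thus rests on the displayed estimate.

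To prove the estimate, fix $k\le n$ and pick multilinear polynomials $f^{(1)},\dots,f^{(m)}\in P_k$, $m=c_k(A)$, that are linearly independent modulo $Id(A)$; one chooses them in a normal form adapted to the identity (\ref{eq2}) (for instance among left-normed products), so that their behaviour under substitution of the unit is controlled. For each $k$-subset $S=\{i_1<\dots<i_k\}$ of $\{1,\dots,n\}$, with complement $\{l_1<\dots<l_{n-k}\}$, set
\[
g_{S,j}=\bigl(\cdots\bigl(f^{(j)}(x_{i_1},\dots,x_{i_k})\,x_{l_1}\bigr)x_{l_2}\cdots\bigr)x_{l_{n-k}}\in P_n .
\]
Because $1$ is the unit of $A^\sharp$, replacing $x_l$ by $1$ for every $l\notin S$ turns $g_{S,j}$ back into $f^{(j)}(x_{i_1},\dots,x_{i_k})$. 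The assertion is that the $\binom{n}{k}m$ polynomials $g_{S,j}$ are linearly independent modulo $Id(A^\sharp)$, which gives the estimate at once.

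So suppose $\sum_{S,j}\alpha_{S,j}g_{S,j}\in Id(A^\sharp)$. By Remark \ref{rm1}, each component of this polynomial obtained by substituting $1$ for a prescribed set of variables lies in $Id(A)$. Fix a $k$-subset $S_0$ and take the component in the variables $\{x_i:i\in S_0\}$, i.e. set $x_l=1$ for $l\notin S_0$. The terms with $S=S_0$ contribute $\sum_j\alpha_{S_0,j}f^{(j)}$; a term with $S\ne S_0$ contributes $f^{(j)}$ with at least one of its variables (those in $S\setminus S_0$) replaced by $1$, then left-multiplied by the outer variables $x_l$, $l\in S_0\setminus S$. The identity (\ref{eq2}) is used precisely here: for the chosen normal form, substituting the unit for an entry of $f^{(j)}$ and then multiplying by further elements lands in a subspace that vanishes modulo (\ref{eq2}), so every such cross-term is an identity of $A$. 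Hence the whole component collapses to $\sum_j\alpha_{S_0,j}f^{(j)}\in Id(A)$, and linear independence of the $f^{(j)}$ forces $\alpha_{S_0,j}=0$ for all $j$. Since $S_0$ was arbitrary, all coefficients vanish, proving the assertion.

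I expect the last paragraph to be the main obstacle: one must (i) select the $f^{(j)}$ in a normal form compatible with (\ref{eq2}) without losing the full exponential rate of the codimensions used, and (ii) verify that in that form every cross-term produced by substituting the unit — and subsequently left-multiplied by the surviving outer variables — is genuinely annihilated modulo (\ref{eq2}). The book-keeping of which variables survive each substitution, together with the left-normed multiplications by the remaining variables, is where the argument is delicate; the reduction and the asymptotic step above are straightforward once the estimate $c_n(A^\sharp)\ge\binom{n}{k}c_k(A)$ is available.
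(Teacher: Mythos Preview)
The paper does not prove this lemma; it is quoted from \cite[Theorem~3]{r17} and closed with a $\Box$. So there is nothing in the paper to compare against, and the only question is whether your sketch stands on its own.

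It does not, and the failure is exactly at the point you flag. With your $g_{S,j}$ the outer variables are appended on the \emph{right} in left-normed fashion. Since modulo (\ref{eq2}) the $f^{(j)}$ may be taken to be left-normed monomials, substituting $1$ for the variables in $S\setminus S_0$ simply deletes those letters from the word, and the surviving outer variables $x_l$, $l\in S_0\setminus S$, are then appended on the right again. The cross-term is therefore itself a left-normed monomial of full degree $k$ in the variables indexed by $S_0$ --- precisely the kind of monomial that does \emph{not} die under $x_1(x_2x_3)\equiv 0$. For instance with $k=2$, $S_0=\{1,2\}$, $S=\{1,3\}$, $f=x_1x_3$, the cross-term is $(x_1\cdot 1)x_2=x_1x_2$, which sits in the span of the main $S_0$-terms. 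The substitution meant to isolate $S_0$ does not isolate it, and the independence argument collapses.

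The construction can be repaired by reversing the placement of the outer variables: take
\[
g_{S,j}=x_{l_{n-k}}\bigl(x_{l_{n-k-1}}\bigl(\cdots\bigl(x_{l_1}\,f^{(j)}(x_{i_1},\dots,x_{i_k})\bigr)\cdots\bigr)\bigr).
\]
Now the $S_0$-component of a cross-term has the shape $x_{l}\bigl(\cdots(x_{l'}\,\tilde f^{(j)})\cdots\bigr)$ with at least one genuine outer factor on the left of a product; when $|S\cap S_0|\ge 2$ the inner block $\tilde f^{(j)}$ already has degree $\ge 2$ and $x_{l'}\,\tilde f^{(j)}$ is killed by (\ref{eq2}), while the residual cases $|S\cap S_0|\in\{0,1\}$ are handled after one more outer multiplication provided $k\ge 3$. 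Hence $c_n(A^\sharp)\ge\binom{n}{k}c_k(A)$ for all $k\ge 3$, which is all your (correct) asymptotic step in the first paragraph needs. In short: the reduction and the binomial computation are fine; only the orientation of the outer multiplications has to be flipped, and then the identity (\ref{eq2}) really does the work you want it to do.
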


Given an integer $T\ge 2$, we define an infinite-dimensional algebra $B_T$ by its basis
$$
\{a,b,z_1^i,\ldots,z_T^i\vert~i=1,2,\ldots\}
$$
and by the multiplication table

$$
z^i_j a =\left\{
               \begin{array}{l}
         z^i_{j+1} \quad \rm{if} \quad j\le T-1,    \\
          0 \qquad \rm{if} \quad j=T
               \end{array}
             \right.
$$
for all $i\ge 1$ and 
$$
z^i_Tb=z_1^{i+1},\quad i\ge 1.
$$
All other products of basis elements are equal to zero. Clearly, algebra $B_T$ is right 
nilpotent of class 3, that is
\begin{equation}\label{eq2}
x_1(x_2x_3)\equiv 0
\end{equation}
is an identity of $B_T$. Due to (\ref{eq2}), any nonzero product of elements of $B_T$
must be left-normed. Therefore we omit brackets in the left-normed products and write
$(y_1y_2)y_3=y_1y_2y_3$ and $(y_1\cdots y_k)y_{k+1}=y_1\cdots y_{k+1}$ if $k\ge 3$. 

We will use the following properties of algebra $B_T$.

\begin{lemma}\label{l3} (\cite[Lema 2.1]{r18})
Let $n\le T$. Then $c_n(B_T)\le 2n^3$.\hfill $\Box$
\end{lemma}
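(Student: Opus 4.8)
The plan is to bound $c_n(B_T)$ by analyzing which multilinear monomials of degree $n$ can be nonzero on $B_T$, using the very rigid multiplication table. Since $B_T$ satisfies $x_1(x_2x_3)\equiv 0$, every nonzero evaluation of a multilinear polynomial is a linear combination of left-normed monomials $x_{\sigma(1)}x_{\sigma(2)}\cdots x_{\sigma(n)}$, so $P_n/(P_n\cap Id(B_T))$ is spanned by the images of these $n!$ monomials. First I would substitute basis elements for the variables and observe that, because the only ``active'' right multiplications are by $a$ (which moves $z^i_j\mapsto z^i_{j+1}$) and by $b$ (which moves $z^i_T\mapsto z^{i+1}_1$), and all other products vanish, a monomial $y_1y_2\cdots y_n$ can be nonzero only if exactly one of the $y_k$ is of the form $z^i_j$ (it must be $y_1$, since $z^i_j$ never appears as a right factor in a nonzero product, and $a,b$ are never left factors), and all remaining $y_k$ with $k\ge 2$ lie in $\{a,b\}$. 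Thus a nonzero evaluation is determined by the choice of the variable placed in position $1$, and the arrangement of the letters $a$ and $b$ among the remaining $n-1$ variables must follow the forced pattern dictated by the subscripts: a block of $a$'s of length $\le T-1$, then one $b$, then again a block of $a$'s, etc.

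Next I would make this counting precise. Fix which variable $x_t$ occupies the first position: $n$ choices. The remaining $n-1$ variables are filled with $a$'s and $b$'s; for the product to be nonzero on $B_T$ the word in $a,b$ read left to right must decompose as $a^{r_0}b\,a^{r_1}b\cdots$ where each $r_\ell$ satisfies $0\le r_\ell\le T-1$ between consecutive $b$'s. But here is the key point exploiting the hypothesis $n\le T$: the total number of letters is $n-1<T$, so at most one $b$ can possibly occur before the word is exhausted (two $b$'s would already force, together with the minimal run lengths, a length argument — actually the first run can be long, but after the first $b$ we need at most $n-2$ more letters, all of which is fine; the real restriction is simply that there are only $n-1$ positions). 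So the admissible $a/b$-patterns are: all $a$'s (one pattern), or exactly one $b$ in one of the $n-1$ slots with the preceding run of $a$'s having length $\le T-1$ (automatically satisfied since that length is $\le n-2\le T-2$), or two $b$'s, etc. Counting crudely: the number of positions for the (at most a few) $b$'s among $n-1$ slots, together with the choice of which concrete basis vector $z^i_j$ goes in position $1$ — but $i$ and $j$ are then completely determined by the pattern and by where we want the product to land, so each admissible monomial contributes at most one dimension. Altogether the number of nonzero multilinear monomial-evaluations is at most $n$ (choices of first variable) times a polynomial in $n$ of degree $\le 2$ (choices of $b$-positions, with the number of $b$'s bounded by roughly $n/1$ but the run-length constraint together with $n\le T$ keeping things small), giving $c_n(B_T)\le 2n^3$.

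The main obstacle, and the step requiring care, is the precise bookkeeping in the previous paragraph: showing that the span of nonzero evaluations has dimension at most $2n^3$ rather than merely ``polynomially bounded.'' One must argue that for a fixed placement of the distinguished $z$-variable in position $1$ and a fixed admissible $a/b$-pattern, the resulting monomial evaluates to a single basis vector $z^i_j$ (with $i,j$ determined by the pattern), so distinct monomials either evaluate to proportional results or are linearly independent in an easily controlled way; then one bounds the number of such (variable, pattern) pairs. The constant $2$ and the exponent $3$ come out of counting: $n$ for the position-$1$ variable, a factor roughly $n$ for where the first $b$ sits, a factor roughly $n$ for the index $i$ range that is reachable — and the hypothesis $n\le T$ is exactly what guarantees we never ``wrap around'' in a way that would create extra collapse or extra freedom, keeping the count clean. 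I would double-check the edge cases $j=T$ (where right multiplication by $a$ kills the monomial) and the interplay with identities already holding in $B_T$ to make sure no admissible monomial is double-counted.
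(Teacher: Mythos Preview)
The paper does not prove this lemma; it is quoted verbatim from \cite[Lemma~2.1]{r18} and marked with a $\Box$. So there is no ``paper's proof'' to compare against, and your proposal is an attempt to reconstruct the argument from scratch. The overall strategy---evaluate left-normed monomials on basis tuples, observe that a nonzero product forces exactly one $z$-variable (necessarily in position~$1$), the remaining letters in $\{a,b\}$, and for $n\le T$ at most one $b$---is exactly the right one, and does lead to the bound.

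However, your write-up has a genuine gap at the point where the combinatorics is supposed to become a bound on $c_n$. What you must count is not ``nonzero monomial-evaluations'' or ``admissible monomials'' but the number of \emph{distinct linear functionals on $P_n$} arising from evaluation at basis tuples followed by a coordinate projection: $c_n$ equals the dimension of the span of these functionals. For $n\le T$ each such nonzero functional is determined by the triple $(t,s,k)$ where $t$ is the index of the variable sent to $z^i_j$, $s$ is the index of the variable sent to $b$ (or ``none''), and $k\in\{2,\dots,n\}$ is the position at which the $b$ must sit (equivalently $k=T-j+2$); the parameters $i,j$ themselves do not create new functionals. This yields at most $n + n(n-1)(n-1)\le 2n^3$ functionals. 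Your text conflates the role of $s$ (which variable carries $b$) with the role of $k$ (which slot $b$ occupies), and the phrase ``$i$ and $j$ are completely determined by the pattern'' is backwards---$j$ determines the pattern and $i$ is irrelevant to the functional. You also begin the ``at most one $b$'' argument and then abandon it; it should be finished: after the first $b$ the index resets to $1$, and reaching $T$ again would require $T-1$ more $a$'s, impossible when only $n-1\le T-1$ letters are available in total. Once these two points are made precise, the proof goes through.
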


\begin{lemma}\label{l4} (\cite[Lema 2.2]{r18})
Let $n= kT+1$. Then 
$$
c_n(B_T)\ge k!=\left(\frac{N-1}{T}\right)!.
$$\hfill $\Box$
\end{lemma}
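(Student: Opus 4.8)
The plan is to prove the inequality by exhibiting $k!$ multilinear polynomials of degree $n=kT+1$ that are linearly independent modulo $Id(B_T)$; since $c_n(B_T)=\dim P_n/(P_n\cap Id(B_T))$, this is exactly what is required.

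First I would reduce to left-normed words. By the right-nilpotency identity (\ref{eq2}), every multilinear monomial of degree $n$ which is not left-normed already lies in $Id(B_T)$, so $P_n$ modulo $P_n\cap Id(B_T)$ is spanned by the classes of the left-normed monomials $x_{\tau(1)}\cdots x_{\tau(n)}$, $\tau\in S_n$. Next I would record how such a monomial is evaluated in $B_T$: if $x_{\tau(1)}\cdots x_{\tau(n)}$ takes a nonzero value under a substitution $x_i\mapsto u_i$, then exactly one $u_i$ contributes its component in $\mathrm{span}\{z^m_j\}$ (the seed), and each of the remaining factors acts on the seed from the right, an $a$-factor by $z^m_j\mapsto z^m_{j+1}$ (and it annihilates the seed when $j=T$), a $b$-factor by $z^m_T\mapsto z^{m+1}_1$ (and it annihilates the seed when $j<T$). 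In particular, with seed $z^1_1$ the only arrangement of the remaining $n-1=kT$ factors in $\{a,b\}$ that survives is $a^{T-1}b\,a^{T-1}b\cdots a^{T-1}b$, whose value is $z^{k+1}_1$; it is this rigidity that I would exploit.

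The heart of the argument is to attach to each $\rho\in S_k$ a multilinear polynomial $f_\rho$ together with a substitution $\varphi_\rho$ of the variables by elements of $B_T$, in such a way that the $k!\times k!$ matrix whose $(\rho,\sigma)$-entry is the coefficient of a fixed basis vector $z^i_j$ in $\varphi_\sigma(f_\rho)$ is non-degenerate; in practice one arranges it to be triangular with nonzero diagonal. A non-degenerate matrix of evaluations forces the $f_\rho$, $\rho\in S_k$, to be linearly independent modulo $Id(B_T)$, and hence $c_n(B_T)\ge k!=((n-1)/T)!$. To construct the $f_\rho$ and $\varphi_\rho$ I would keep one distinguished seed variable, distribute the remaining $kT$ variables over the $k$ length-$T$ blocks that the chain structure of $B_T$ dictates, and encode $\rho$ into both $f_\rho$ and $\varphi_\rho$; by the rigidity above, a factor sitting in a position incompatible with the walk prescribed by $\varphi_\sigma$ either stalls that walk or triggers a jump one chain too early, so only the matching evaluations are nonzero.

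The main obstacle is exactly this last point: writing down the substitutions $\varphi_\rho$ and the polynomials $f_\rho$ explicitly and checking that the resulting evaluation matrix is non-degenerate. The reduction to left-normed words, the seed-and-walk description of evaluations, and the passage from a non-degenerate evaluation matrix to linear independence modulo the identities are all routine; once the $k!$ linearly independent polynomials are produced, the stated bound is immediate.
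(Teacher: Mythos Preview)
The present paper does not prove this lemma; it is quoted from \cite[Lemma~2.2]{r18} and closed with a box, so there is no in-paper argument to compare against. Your outline is sound where it is explicit: the reduction to left-normed words via (\ref{eq2}) and the seed-and-walk description of nonzero evaluations in $B_T$ are both correct.

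The gap you yourself flag---actually producing the $f_\rho$ and $\varphi_\rho$---is real, and you should be warned that the most natural attempt fails. If $f_\rho$ is the left-normed monomial obtained by permuting the $k$ length-$T$ blocks according to $\rho$, and every $\varphi_\sigma$ uses the seed $z^1_1$ together with assignments $x_i\mapsto\alpha_i a+\beta_i b$ for the remaining variables, then each $f_\rho$ evaluates to the \emph{same} scalar multiple of $z^{k+1}_1$: the scalar is a product of certain $\alpha_i$'s and $\beta_i$'s that does not see the order of the blocks, so the evaluation matrix has rank one rather than $k!$. What unlocks the count is varying the starting index of the seed. If the seed is $z^r_s$ instead of $z^r_1$, the $b$-positions in the surviving walk shift to $T{-}s{+}1,\,2T{-}s{+}1,\ldots,\,kT{-}s{+}1$, and two monomials $x_0x_{\tau(1)}\cdots x_{\tau(kT)}$ agree modulo $Id(B_T)$ only if, for \emph{every} $s\in\{1,\ldots,T\}$, they place the same unordered $k$-set of variables into those shifted positions. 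The number of inequivalent monomials with seed $x_0$ is therefore the multinomial $(kT)!/(k!)^{T}\ge k!$. Your triangular-matrix scheme can be salvaged, but the substitutions $\varphi_\sigma$ must range over different values of $s$, not merely over different $a/b$ assignments with $s=1$ fixed.
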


\begin{lemma}\label{l5} (\cite[Lema 2.3]{r18})
Any multilinear identity $f=f(x_1,\ldots,x_n)$ of degree $n\le T$ of
algebra $B_T$ is an identity of $B_{T+1}$.\hfill $\Box$
\end{lemma}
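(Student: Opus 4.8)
We shall prove the contrapositive in its sharp form: \emph{if a multilinear $f=f(x_1,\ldots,x_n)$ with $n\le T$ is not an identity of $B_{T+1}$, then it is not an identity of $B_T$.} Since (\ref{eq2}) holds in both algebras and, modulo it, $P_n$ is spanned by the left-normed monomials $x_{\sigma(1)}\cdots x_{\sigma(n)}$, $\sigma\in S_n$, we may assume $f=\sum_{\sigma\in S_n}c_\sigma\,x_{\sigma(1)}\cdots x_{\sigma(n)}$; by multilinearity it suffices to evaluate $f$ on tuples of basis elements. Reading off the multiplication of $B_{T+1}$, both $a$ and $b$ annihilate everything on the left and no element acts nontrivially to the right of any $z^s_t$; hence a nonzero value $f(u_1,\ldots,u_n)\ne 0$ forces \emph{exactly one} variable, say $x_{k_0}$, to be sent to some $z^i_j$, and every other variable to $a$ or $b$. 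Only the monomials with $\sigma(1)=k_0$ survive, and such a monomial evaluates to the left-normed product $z^i_j\,u_{\sigma(2)}\cdots u_{\sigma(n)}$, in which $u_{\sigma(2)},\ldots,u_{\sigma(n)}$ form a word $w_\sigma\in\{a,b\}^{\,n-1}$.

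The crux is a length count inside $B_{T+1}$: in a left-normed product a letter $b$ contributes a nonzero factor only when the preceding partial product is the top element $z^s_{T+1}$ of a chain, so producing one chain-transition from $z^i_j$ costs $T+1-j$ letters $a$, and once the transition has occurred the unused part of $w_\sigma$ has length at most $T-2<T$, too short to climb again to the top of the next chain. Consequently a nonzero value of $f$ forces \emph{at most one} variable to be sent to $b$. If none is, then $w_\sigma=a^{\,n-1}$ for all surviving $\sigma$ and $f(u_1,\ldots,u_n)=\bigl(\sum_{\sigma(1)=k_0}c_\sigma\bigr)z^i_{\,j+n-1}$ with $j+n-1\le T+1$. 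If one variable $x_{k_1}$ is sent to $b$, then the surviving monomials are precisely those with $\sigma(1)=k_0$ and $\sigma(p_0+1)=k_1$, where $p_0:=T+2-j$ is the forced position of $b$ within $w_\sigma$; here necessarily $1\le p_0\le n-1$, hence $T+3-n\le j\le T+1$, and $f(u_1,\ldots,u_n)=\bigl(\sum c_\sigma\bigr)z^{i+1}_{\,n-p_0}$, the sum running over those $\sigma$. In both cases, non-vanishing of $f$ on the tuple is equivalent to non-vanishing of the displayed coefficient sum, which depends only on $f$, $k_0$, and (in the second case) $k_1$ and $p_0$.

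Given such a witnessing tuple for $B_{T+1}$, we produce one for $B_T$. If no $b$ was used, assign $x_{k_0}\mapsto z^1_1$ and all other variables $\mapsto a$; since $n\le T$ the element $z^1_n$ is nonzero in $B_T$, so $f$ evaluates there to $\bigl(\sum_{\sigma(1)=k_0}c_\sigma\bigr)z^1_n\ne 0$. If one variable $x_{k_1}$ was sent to $b$, keep $x_{k_1}\mapsto b$ and every $a$-variable unchanged, and replace $x_{k_0}\mapsto z^i_j$ by $x_{k_0}\mapsto z^i_{\,j-1}$ (legitimate, since $j\ge T+3-n\ge 3$ gives $1\le j-1\le T$). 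In $B_T$ a letter $b$ occurring at word-position $p$ acts nontrivially exactly when $(j-1)+(p-1)=T$, i.e. $p=T+2-j=p_0$ --- the same selection rule that held for $B_{T+1}$. Hence the same monomials of $f$ survive with the same coefficients, and $f$ evaluates in $B_T$ to $\bigl(\sum c_\sigma\bigr)z^{i+1}_{\,n-p_0}$, nonzero since $1\le n-p_0\le T-1$. Either way $f\notin Id(B_T)$, the desired contradiction.

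I expect the only genuine obstacle to be the bookkeeping of the last two paragraphs: verifying that ``the letter $b$ lands exactly on the top of a chain'' translates, under the shift $j\mapsto j-1$, into the corresponding condition for $B_T$, that the coefficient sums controlling (non-)vanishing coincide term by term, and that the auxiliary inequalities ($1\le p_0\le n-1$, $3\le j\le T+1$, $1\le n-p_0\le T-1$) keep every intermediate partial product inside the smaller algebra. The conceptual content is simply that, in degrees below $T$, the two algebras support ``parallel'' nonzero products differing only by a unit shift of the starting index.
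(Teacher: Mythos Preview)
Your argument is correct. Note, however, that the paper itself does not prove this lemma: it is quoted from \cite[Lemma~2.3]{r18} and closed with a $\Box$, so there is no in-paper proof to compare against.

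As a self-contained substitute your contrapositive works cleanly. Reducing modulo identity~(\ref{eq2}) to left-normed monomials is legitimate because (\ref{eq2}) holds in both $B_T$ and $B_{T+1}$, so $f$ and its left-normed reduction lie in the same coset of $Id(B_T)$ and of $Id(B_{T+1})$. The structural observation that a nonzero evaluation on basis elements forces exactly one $z$-generator (necessarily in the leftmost slot of each surviving monomial) and, because $n\le T$ leaves at most $T-2$ letters after the first chain-transition, at most one $b$, is correct. The transplant to $B_T$ is the key step and you handle it properly: in the no-$b$ case the coefficient sum $\sum_{\sigma(1)=k_0}c_\sigma$ is unchanged and $z^1_n$ is a legitimate target since $n\le T$; in the one-$b$ case the shift $z^i_j\mapsto z^i_{j-1}$ (valid because $j\ge T+3-n\ge 3$) makes the condition ``$b$ acts nontrivially at word-position $p$'' read $(j-1)+(p-1)=T$ in $B_T$, i.e.\ $p=T+2-j=p_0$, exactly as in $B_{T+1}$. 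Hence the same monomials survive with the same coefficients, and a $b$ at any position $p\ne p_0$ kills the product in $B_T$ either by meeting $z^i_m b$ with $m<T$ (if $p<p_0$) or by first passing through $z^i_T\cdot a=0$ (if $p>p_0$). All the auxiliary inequalities you flag indeed hold.
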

 
Let $F[\theta]$ be a polynomial ring over $F$ on one indeterminate $\theta$
and let $F[\theta]_0$ be its subring of all polynomials without free term. Denote 
by $Q_N$ the quotient algebra
$$
Q_N=\frac{F[\theta]_0}{(Q^{N+1})},
$$
where $(Q^{N+1})$ is an ideal of $F[\theta]$ generated by $Q^{N+1}$. Fix an 
infinite sequence of integers $T_1<N_1<T_2<N_2\ldots$ and consider the algebra
\begin{equation}\label {eq3}
R=B(T_1,N_1)\oplus B(T_2,N_2)\oplus\cdots\, ,
\end{equation}
where $B(T,N)=B_T\otimes Q_N$.

Let $R$ be an algebra of the type (\ref{eq3}). Then the following lemma holds.

\begin{lemma}\label{l6}
For any $i\ge 1$, the following equalities hold:
\begin{itemize}
\item[(a)]
if $T_i\le n\le N_i$ then
$$
P_n\cap Id(R)=P_n\cap Id(B(T_i,N_i)\oplus B(T_{i+1},N_{i+1}))=
P_n\cap Id(B_{T_i}\oplus B_{T_{i+1}});
$$
\item[(b)]
if $N_i <n\le T_{i+1}$ then
$$
P_n\cap Id(R)=P_n\cap Id(B(T_{i+1},N_{i+1}))=
P_n\cap( Id(B_{T_{i+1}})).
$$
\end{itemize}
\end{lemma}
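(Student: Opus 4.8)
The plan is to exploit the ``periodic'' structure of the direct sum (\ref{eq3}) together with the two effects that tensoring $B_T$ with $Q_N$ produces: on one hand, $Q_N$ is nilpotent of class $N+1$, so any product of more than $N$ elements in $B(T,N)$ vanishes, which kills the contribution of $B(T_i,N_i)$ to multilinear identities of degree $n>N_i$; on the other hand, $Q_N$ is commutative and associative with large nilpotency index, so for $n\le N_i$ the algebra $B(T_i,N_i)$ satisfies exactly the same multilinear identities of degree $n$ as $B_{T_i}$ itself (the extra tensor factor cannot create new identities in that range, and cannot destroy any since $Q_N\neq 0$). I would first isolate these two facts as the workhorse: (i) for $n>N$, every multilinear polynomial of degree $n$ is an identity of $B(T,N)$; (ii) for $n\le N$, $P_n\cap Id(B(T,N))=P_n\cap Id(B_T)$. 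Fact (i) is immediate from right-nilpotency of class $3$ (so nonzero products are left-normed of length $n$) combined with $Q_N^{N+1}=0$. Fact (ii) follows because a left-normed monomial evaluated in $B_T\otimes Q_N$ factors as (evaluation in $B_T$) $\otimes$ (a product of $n$ elements of $Q_N$), and for $n\le N$ one can choose the $Q_N$-part to be $\theta\otimes\cdots$ with $\theta^n\neq 0$, so nothing collapses.

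Next I would combine these with the behaviour of $Id$ under finite and infinite direct sums: $Id(\bigoplus_j C_j)=\bigcap_j Id(C_j)$, intersected with $P_n$. For part (a), assume $T_i\le n\le N_i$. For every index $j<i$ we have $N_j<T_i\le n$, hence by Fact (i) $P_n\subseteq Id(B(T_j,N_j))$, so those summands drop out of the intersection. For every index $j\ge i$ we have $n\le N_i\le N_j$, so by Fact (ii) $P_n\cap Id(B(T_j,N_j))=P_n\cap Id(B_{T_j})$; thus $P_n\cap Id(R)=\bigcap_{j\ge i}(P_n\cap Id(B_{T_j}))$. It remains to show the intersection over $j\ge i$ collapses to $j\in\{i,i+1\}$, i.e. $P_n\cap Id(B_{T_i})\cap Id(B_{T_{i+1}})\subseteq Id(B_{T_j})$ for all $j>i+1$. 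Since $n\le N_i<T_{i+1}\le T_j$, any multilinear identity of $B_{T_{i+1}}$ of degree $n$ is, by Lemma \ref{l5} (applied repeatedly, $T_{i+1}<T_{i+2}<\cdots$), an identity of $B_{T_j}$; this gives the needed inclusion and proves (a), since the last displayed equality (dropping the $Q_N$'s on the two remaining summands) is again Fact (ii).

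For part (b), assume $N_i<n\le T_{i+1}$. Now for every $j\le i$ we have $N_j\le N_i<n$, so by Fact (i) all summands $B(T_j,N_j)$ with $j\le i$ contribute nothing; hence $P_n\cap Id(R)=\bigcap_{j\ge i+1}(P_n\cap Id(B(T_{j},N_{j})))$, and since $n\le T_{i+1}\le N_{i+1}\le N_j$ for all $j\ge i+1$, Fact (ii) turns this into $\bigcap_{j\ge i+1}(P_n\cap Id(B_{T_j}))$. Finally, $n\le T_{i+1}$ lets me invoke Lemma \ref{l5} again to see that an identity of $B_{T_{i+1}}$ of degree $n$ is an identity of every $B_{T_j}$ with $j\ge i+1$, so the whole intersection equals $P_n\cap Id(B_{T_{i+1}})=P_n\cap Id(B(T_{i+1},N_{i+1}))$, the last step once more by Fact (ii). I expect the main obstacle to be the careful bookkeeping in Fact (ii) — namely checking that tensoring with $Q_N$ neither adds identities (needs $n\le N$ so that the relevant power of $\theta$ survives) nor removes the existing ones — and making sure the index inequalities $T_i<N_i<T_{i+1}<\cdots$ are used in exactly the right places so that every summand is handled by precisely one of the two facts.
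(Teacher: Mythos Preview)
Your argument is correct and follows essentially the same route as the paper, which simply states that the lemma ``follows immediately from the equality $B(T_i,N_i)^{N_i+1}=0$ and from Lemma \ref{l5}.'' You have unpacked precisely these two ingredients: Fact (i) is the nilpotency $B(T,N)^{N+1}=0$, and the collapsing of the infinite intersection to one or two terms is Lemma \ref{l5}. Your Fact (ii), that tensoring by $Q_N$ does not change multilinear identities in degrees $n\le N$, is the one point the paper leaves implicit; your justification of it (commutativity and associativity of $Q_N$ factor out the $\theta$-part, and $\theta^n\neq 0$ for $n\le N$) is exactly what is needed.
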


\begin{proof}
This follows immediately from the equality $B(T_i,N_i)^{N_i+1}=0$ and from Lemma \ref{l5}.
\end{proof}

The folowing remark is obvious.

\begin{remark}\label{rm3}
Ler $R$ be an algebra  of type (\ref{eq3}). Then
$$
Id(R^\sharp)=Id(B(T_1,N_1)^\sharp\oplus B(T_2,N_2)^\sharp\oplus\cdots ).
$$\hfill $\Box$
\end{remark} 

\section{The main result}

\begin{theorem}\label{t1}
For any real $\alpha>1$, there exists an algebra $R_\alpha$ with  $\underline{exp}(R_\alpha)=1,
\overline{exp}(R_\alpha)=\alpha$ such that $\underline{exp}(R_\alpha^\sharp)=2$ and
$\alpha\le\overline{exp}(R_\alpha^\sharp)\le\alpha+1$.
\end{theorem}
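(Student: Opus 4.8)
The plan is to build $R_\alpha$ as an algebra of type \eqref{eq3}, namely $R_\alpha = B(T_1,N_1)\oplus B(T_2,N_2)\oplus\cdots$, with the parameters $T_1<N_1<T_2<N_2<\cdots$ chosen so that the two "phases" of the codimension sequence alternate sharply enough. Roughly, on the stretch $N_i<n\le T_{i+1}$ Lemma~\ref{l6}(b) reduces $c_n(R_\alpha)$ to $c_n(B_{T_{i+1}})$, and by Lemma~\ref{l3} this is polynomially bounded (here $n\le T_{i+1}$), forcing $\root n\of{c_n}$ close to $1$; on the stretch $T_i\le n\le N_i$ Lemma~\ref{l6}(a) reduces it to $c_n(B_{T_i}\oplus B_{T_{i+1}})$, and Lemma~\ref{l4} gives a lower bound of the form $\bigl(\tfrac{n-1}{T_i}\bigr)!$, which for $n$ of order $N_i$ grows like $\alpha^n$ once the ratio $N_i/T_i$ is calibrated via Stirling's formula. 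Choosing $N_i/T_i\to\log\alpha$ (suitably interpreted) and letting the gaps $T_{i+1}/N_i\to\infty$ makes $\liminf\root n\of{c_n(R_\alpha)}=1$ and $\limsup\root n\of{c_n(R_\alpha)}=\alpha$. This is exactly the construction of \cite{r18} with a mild adjustment of constants, so I would cite it and only indicate the parameter choice; this part is routine.

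The substantive new content is the passage to $R_\alpha^\sharp$. First I would record that $B_T$, hence $B(T,N)=B_T\otimes Q_N$, hence $R_\alpha$, satisfies the identity \eqref{eq2}, so Lemma~\ref{l2} applies and yields $\underline{exp}(R_\alpha^\sharp)\ge\underline{exp}(R_\alpha)+1=2$. For the matching upper bound $\underline{exp}(R_\alpha^\sharp)\le 2$ I would argue directly on $c_n(R_\alpha^\sharp)$ along the subsequence $n$ with $N_i<n\le T_{i+1}$: by Remark~\ref{rm3} and Remark~\ref{rm2} (applied with $B = B_{T_{i+1}}$, whose relevant identities of degree $\le n\le T_{i+1}$ are shared by every summand by Lemma~\ref{l5} and killed by nilpotency in the earlier summands), the identities of $R_\alpha^\sharp$ of degree $n$ coincide with those of $B_{T_{i+1}}^\sharp$, so $c_n(R_\alpha^\sharp)=c_n(B_{T_{i+1}}^\sharp)$. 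Now $c_n(B_{T_{i+1}})\le 2n^3$ for $n\le T_{i+1}$ by Lemma~\ref{l3}, and expanding $f(1+x_1,\dots,1+x_n)$ as in \eqref{eq1} shows $c_n(A^\sharp)\le\sum_{k=0}^n\binom nk c_k(A)$; with $c_k(B_{T_{i+1}})\le 2k^3$ for all $k\le n\le T_{i+1}$ this sum is at most $2n^3\cdot 2^n$, whence $\root n\of{c_n(R_\alpha^\sharp)}\le 2\root n\of{2n^3}\to 2$. Combining the two bounds along this subsequence gives $\underline{exp}(R_\alpha^\sharp)=2$.

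For the upper bound $\overline{exp}(R_\alpha^\sharp)\le\alpha+1$ I would simply invoke Lemma~\ref{l1}: since $R_\alpha$ has exponentially bounded codimension growth with $\overline{exp}(R_\alpha)=\alpha$, we get $\overline{exp}(R_\alpha^\sharp)\le\overline{exp}(R_\alpha)+1=\alpha+1$. For the lower bound $\overline{exp}(R_\alpha^\sharp)\ge\alpha$ I would exploit the elementary inequality $c_n(A^\sharp)\ge c_n(A)$ (every multilinear identity of $A^\sharp$ is in particular an identity of $A$, since setting the adjoined unit aside the polynomial $f$ itself is among its own components in \eqref{eq1}; equivalently $Id(A^\sharp)\subseteq Id(A)$). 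Hence $\overline{exp}(R_\alpha^\sharp)\ge\overline{exp}(R_\alpha)=\alpha$, and we are done.

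The main obstacle is the exact identification $c_n(R_\alpha^\sharp)=c_n(B_{T_{i+1}}^\sharp)$ on the low-growth stretch $N_i<n\le T_{i+1}$: one must check carefully, via Remarks~\ref{rm1}–\ref{rm3} and Lemma~\ref{l5}, that \emph{all} components $f_{i_1,\dots,i_k}$ arising in \eqref{eq1} for a putative identity of $R_\alpha^\sharp$ are controlled simultaneously across every summand $B(T_j,N_j)^\sharp$ — those with $j\le i$ because $B(T_j,N_j)^{N_j+1}=0$ kills products of length $\ge N_j+1$ and the shorter components are handled by Lemma~\ref{l5}, and those with $j\ge i+1$ because $T_j\ge T_{i+1}\ge n$ so Lemma~\ref{l5} again transfers identities from $B_{T_{i+1}}$. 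Making this bookkeeping precise, so that the codimension of $R_\alpha^\sharp$ genuinely collapses to that of a single $B_{T_{i+1}}^\sharp$ on the relevant block of $n$'s, is the only step requiring real care; everything else is an application of the quoted lemmas.
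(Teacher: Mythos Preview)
Your reduction $c_n(R_\alpha^\sharp)=c_n(B_{T_{i+1}}^\sharp)$ on the stretch $N_i<n\le T_{i+1}$ is where the argument breaks. To apply Remark~\ref{rm2} with $A=R_\alpha$ and $B=B_{T_{i+1}}$ you would need $R_\alpha$ to satisfy \emph{every} multilinear identity of $B_{T_{i+1}}$ in every degree $k\le n$. For degrees $T_j<k\le N_j$ with $j\le i$, however, Lemma~\ref{l6}(a) gives $P_k\cap Id(R_\alpha)=P_k\cap Id(B_{T_j}\oplus B_{T_{j+1}})\subseteq P_k\cap Id(B_{T_j})$, and by Lemmas~\ref{l3} and~\ref{l4} one has $c_k(B_{T_j})\gg 2k^3\ge c_k(B_{T_{i+1}})$ there, so $Id_k(B_{T_{i+1}})\not\subseteq Id_k(B_{T_j})$. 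Lemma~\ref{l5} only transfers identities from $B_T$ to $B_{T'}$ with $T'\ge T$, never the reverse; thus it cannot ``handle the shorter components'' for the earlier summands $j\le i$ as you claim. Concretely, the low-degree components $f_{i_1,\dots,i_k}$ with $T_j<k\le N_j$ of a degree-$n$ identity of $R_\alpha^\sharp$ are genuinely constrained by $B_{T_j}$, and that constraint is strictly stronger than the one imposed by $B_{T_{i+1}}$; hence $c_n(R_\alpha^\sharp)$ can exceed $c_n(B_{T_{i+1}}^\sharp)$.

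This is precisely the obstruction the paper's proof is built around. Rather than collapsing $c_n(R_\alpha^\sharp)$ to a single $c_n(B_T^\sharp)$, the paper uses the bound $c_n(R_\alpha^\sharp)\le\sum_{k=0}^n\binom nk c_k(R_\alpha)$, splits at $k=N_i$, and estimates the residual sum $\Sigma_1=\sum_{k\le N_i}\binom nk c_k(R_\alpha)$ by $C(N_i,\alpha)\,\Phi(N_i/n)^n$ via Stirling. This term is only subexponential in $n$ for fixed $N_i$, so one must \emph{choose} $T_{i+1}$ large enough that $\Sigma_1+\Sigma_2<(2+2^{-i})^{T_{i+1}}$; this extra inequality is built into the inductive choice of parameters and is not present in \cite{r18}, so the algebra is not literally the one constructed there. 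The remaining pieces of your outline---Lemma~\ref{l2} for $\underline{exp}(R_\alpha^\sharp)\ge2$, Lemma~\ref{l1} for $\overline{exp}(R_\alpha^\sharp)\le\alpha+1$, and the subalgebra inequality for $\overline{exp}(R_\alpha^\sharp)\ge\alpha$---are correct and agree with the paper.
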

\begin{proof}
Note that 
\begin{equation}\label{eq4}
c_n(A)\le nc_{n-1}(A)
\end{equation}
for any algebra $A$ satisfying (\ref{eq2}). We will construct $R_\alpha$ of type (\ref{eq3})
by a special choice of the sequence $T_1,N_1,T_2,N_2,\ldots$ depending on $\alpha$. First, 
choose $T_1$ such that
\begin{equation}\label{eq5}
2m^3<\alpha^m
\end{equation}
for all $m\ge T_1$. By Lemma \ref{l4}, algebra $B_{T_1}$ has an overexponential codimenson 
growth. Hence there exists $N_1> T_1$ such that
$$
c_n(B_{T_1})<\alpha^n \quad \hbox{for~all} \quad n\le N_1-1
\quad \hbox{and} \quad c_{N_1}(B_{T_1}) \ge \alpha^{N_1}.
$$

Consider an arbitrary $n>N_1$. By Remark \ref{rm1}, we have
$$
c_n(R^\sharp)\le \sum_{k=0}^n {n\choose k}c_k(R)=\Sigma_1'+\Sigma_2',
$$
where
$$
\Sigma_1'=\sum_{k=0}^{N_1} {n\choose k}c_k(R),\quad
\Sigma_2'=\sum_{k=N_1+1}^{n} {n\choose k}c_k(R).
$$
By Lemma \ref{l6}, we have $\Sigma_1'+\Sigma_2'\le \Sigma_1+\Sigma_2$, where
$$
\Sigma_1=\sum_{k=0}^{N_1} {n\choose k}c_k(B_{T_1}),\quad
\Sigma_2=\sum_{k=0}^{n} {n\choose k}c_k(B_{T_2}).
$$
Then for any $T_2>N_1$, an upper bound for $\Sigma_2$ is
\begin{equation}\label{eq6}
\Sigma_2\le\sum_{k=0}^n {n\choose k} 2k^3\le 2n^3 \sum_{k=0}^n {n\choose k}=2n^3 2^n,
\end{equation}
which follows from (\ref{eq5}), provided that $n\le T_2$.

Let us find an upper bound for $\Sigma_1$ assuming that $n$ is sufficiently large. 
Clearly,
\begin{equation}\label{eq6a}
\Sigma_1\le N_1\alpha^{N_1}\sum_{k=0}^{N_1} {n\choose k} 
\end{equation}
which follows from the choice of $N_1$, relation (\ref{eq4}), and the equality
$B(T_1,N_1)^n=0$ for all $n\ge N_1+1$. Since $N_1\alpha^{N_1}$ is a constant for fixed
$N_1$, we only need to estimate the sum of binomial coefficients.

From the Stirling formula
$$
m! =\sqrt{2\pi m} (\frac{m}{e})^m e^{\frac{1}{12m+\theta_m}},\quad 0<\theta_m<1,
$$
it follows that
\begin{equation}\label{eq7}
{n\choose k}\le \sqrt{\frac{n}{k(n-k)}}\cdot\frac{n^n}{k^k(n-k)^{n-k}}.
\end{equation}

Now we define the function $\Phi:[0;1]\to\mathbb R$ by setting
$$
\Phi(x)=\frac{1}{x^x(1-x)^{1-x}}.
$$
It is not difficult to show that $\Phi$ increases on $[0;1/2]$, $\Phi(0)=1$,
and $\Phi(x)\le 2$ on $[0;1]$. In terms of the function $\Phi$ we rewrite (\ref{eq7}) as
\begin{equation}\label{eq8}
{n\choose k}\le \sqrt{\Phi\left(\frac{k}{n}\right)}\cdot\Phi\left(\frac{k}{n}\right)^n <
2\Phi\left(\frac{k}{n}\right)^n\le 2\Phi\left(\frac{N_1}{n}\right)^n
\end{equation}
provided that $n> 2N_1$. Now (\ref{eq6a}) and (\ref{eq8}) together with (\ref{eq6})
imply
$$
\Sigma_1\le 2N_1\alpha^{N_1}(N_1+1)\Phi\left(\frac{N_1}{n}\right)^n,
\quad \Sigma_2\le 2n^3 2^n.
$$

Since
$$
\lim_{n\to\infty}\Phi\left(\frac{N_1}{n} \right)^n=1
$$
and $\Phi(x)$ increases on $(0;1/2]$, there exists $n>2N_1$ such that
\begin{equation}\label{eq9}
2N_1(N_1+1)\alpha^{N_1}\Phi\left(\frac{N_1}{n} \right)^n+2n^3 2^n < (2+\frac{1}{2})^n.
\end{equation}

Now we take $T_2$ to be equal to the minimal $n>2N_1$ satisfying (\ref{eq9}). Note that for
such $T_2$ we have
$$
c_n(R^\sharp)<(2+\frac{1}{2})^n
$$
for $n=T_2$.

As soon as $T_2$ is choosen, we can take $N_2$ as the minimal $n$ such that $c_n(B_{T_2})
\ge \alpha^n$. Then again, $c_m(R)<m
\alpha^m$ if $m<N_2$. Repeating this procedure, we can
construct an infinite chain $T_1<N_1<T_2<N_2\cdots~$ such that
\begin{equation}\label{eq9o}
c_n(R)<\alpha^n+2n^3
\end{equation}
for all $n\ne N_1,N_2,\ldots$,
\begin{equation}\label{eq9a}
\alpha^n\le c_n(R)<\alpha^n+n(\alpha^{n-1}+2n^3)
\end{equation}
for all $n= N_1,N_2,\ldots$
and
\begin{equation}\label{eq10}
2N_j(N_j+1)\alpha^{N_j}\Phi\left(\frac{N_j}{T_{j+1}} \right)^{T_{j+1}}+
2T_{j+1}^3\cdot 2^{T_{j+1}} < (2+\frac{1}{2^j})^{T_{j+1}}
\end{equation}
for all $j=1,2,\ldots~$.

Let us denote by $R_\alpha$ the  just constructed algebra $R$ of type (\ref{eq3}).
Then (\ref{eq10}) means that
\begin{equation}\label{eq11}
c_n(R_\alpha^\sharp)<(2+\frac{1}{2^j})^n
\end{equation}
if $n=T_{j+1}, j=1,2,\ldots~$. It follows from inequality (\ref{eq11}) that
\begin{equation}\label{eq12}
\underline{exp}(R_\alpha^\sharp)\le 2.
\end{equation}
On the other hand, since $R_\alpha$ is not nilpotent, it follows that
\begin{equation}\label{eq13}
\underline{exp}(R_\alpha^\sharp)\ge 1.
\end{equation}
Since the PI-exponent of non-nilpotent algebra cannot be strictly less than $1$, 
relations (\ref{eq12}), (\ref{eq13}) and Lemma \ref{l2} imply
$$
\underline{exp}(R_\alpha)=1,~\underline{exp}(R_\alpha^\sharp)=2.
$$
Finally, relations (\ref{eq9o}), (\ref{eq9a}) imply the equality $\overline{exp}(R_\alpha)=\alpha$.
Applying Lemma \ref{l1}, we see that $\overline{exp}(R_\alpha^\sharp)\le\alpha+1$.
The inequality $\alpha=\overline{exp}(R_\alpha)\le \overline{exp}(R_\alpha^\sharp)$ is obvious,
since $R_\alpha$ is a subalgebra of $R_\alpha^\sharp$, Thus we have  completed the proof
of Theorem \ref{t1}.
\end{proof}

As a consequence of Theorem \ref{t1} we get an infinite family of unital algebras of 
exponential codimension growth without ordinary PI-exponent.

\begin{corollary}\label{c1} 
Let $\beta>2$ be an arbitrary real number. Then the ordinary PI-exponent of unital algebra
$R_\beta^\sharp$ from Theorem \ref{t1} does not exist. Moreover,
$\underline{exp}(R_\beta^\sharp)=2$, whereas $\beta\le\overline{exp}(R_\beta^\sharp)\le\beta+1$.\hfill $\Box$
\end{corollary}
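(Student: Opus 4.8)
The plan is to derive Corollary \ref{c1} as an immediate specialization of Theorem \ref{t1}. The theorem already produces, for every real $\alpha>1$, an algebra $R_\alpha$ with $\underline{exp}(R_\alpha^\sharp)=2$ and $\alpha\le\overline{exp}(R_\alpha^\sharp)\le\alpha+1$, and in particular with exponentially bounded codimension growth (the bounds $c_n(R_\alpha^\sharp)<(2+1/2^j)^n$ along the subsequence $n=T_{j+1}$, together with Lemma \ref{l1}, keep $\{c_n(R_\alpha^\sharp)\}$ exponentially bounded). So the first step is simply: fix $\beta>2$, set $\alpha=\beta$, and invoke Theorem \ref{t1} to obtain the unital algebra $R_\beta^\sharp$ with the stated lower and upper PI-exponents.

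The second and essential step is to observe that the ordinary PI-exponent of $R_\beta^\sharp$ exists if and only if $\underline{exp}(R_\beta^\sharp)=\overline{exp}(R_\beta^\sharp)$, as recalled in Section 2. But from Theorem \ref{t1} we have $\underline{exp}(R_\beta^\sharp)=2$ while $\overline{exp}(R_\beta^\sharp)\ge\beta>2$. Hence $\underline{exp}(R_\beta^\sharp)<\overline{exp}(R_\beta^\sharp)$, so the limit $\lim_{n\to\infty}\sqrt[n]{c_n(R_\beta^\sharp)}$ does not exist. The "Moreover" clause is then just a verbatim restatement of the corresponding conclusions of Theorem \ref{t1}.

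There is essentially no obstacle here: the corollary is a direct reading-off of the theorem once one notes the strict inequality $2<\beta$ forces $\underline{exp}\ne\overline{exp}$. The only thing worth flagging is that one should make explicit that $R_\beta^\sharp$ is indeed unital (it has the adjoined external unit by construction) and that its codimension sequence is exponentially bounded — both of which are guaranteed by the construction in Theorem \ref{t1} and by Lemma \ref{l1} — so that the phrase "unital algebra of exponential codimension growth without ordinary PI-exponent" is literally justified.

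\begin{proof}
Apply Theorem \ref{t1} with $\alpha=\beta$. This yields the unital algebra $R_\beta^\sharp$, which has exponentially bounded codimension growth by Lemma \ref{l1} (since $\overline{exp}(R_\beta)=\beta<\infty$), and which satisfies
$$
\underline{exp}(R_\beta^\sharp)=2,\qquad \beta\le\overline{exp}(R_\beta^\sharp)\le\beta+1.
$$
As noted in Section 2, the ordinary PI-exponent $\lim_{n\to\infty}\sqrt[n]{c_n(R_\beta^\sharp)}$ exists if and only if $\underline{exp}(R_\beta^\sharp)=\overline{exp}(R_\beta^\sharp)$. Since $\beta>2$, we have
$$
\underline{exp}(R_\beta^\sharp)=2<\beta\le\overline{exp}(R_\beta^\sharp),
$$
so these two quantities are distinct and the ordinary PI-exponent of $R_\beta^\sharp$ does not exist.
\end{proof}
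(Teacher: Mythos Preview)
Your proposal is correct and matches the paper's treatment: the paper gives no separate proof for the corollary (it ends with \hfill $\Box$), treating it as an immediate consequence of Theorem~\ref{t1}, and your argument spells out precisely that specialization together with the observation that $\underline{exp}(R_\beta^\sharp)=2<\beta\le\overline{exp}(R_\beta^\sharp)$ forces nonexistence of the ordinary PI-exponent.
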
 

\section*{Acknowledgments} We would like to thank the referee for comments and suggestions.


\begin{thebibliography}{99}

\bibitem{r14} (MR0886063)
\newblock  Yu. Bahturin,
\newblock \emph{Identical relations in Lie algebras},
\newblock VNU Science Press, b.v., Utrecht, 1987.

\bibitem{r2} (MR1935223)
\newblock  Yu. Bahturin and  V. Drensky, 
\newblock Graded polynomial identities of matrices, 
\newblock \emph{Linear Algebra Appl.}, \textbf{357} (2002), 15--34.

\bibitem{r15} (MR1712064 (2000j:16002))
\newblock V. Drensky, 
\newblock \emph{Free Algebras and PI-Algebras}, 
\newblock Springer, 1999.

\bibitem{r8} (MR2799615 (2012e:17059))
\newblock A. Giambruno, I. Shestakov and M. Zaicev, 
\newblock Finite-dimensional non-associative algebras and codimension growth, 
\newblock \emph{Adv. in Appl. Math.}, \textbf{47} (2011), 125--139.

\bibitem{r5} (MR1658530 (99k:16049))
\newblock A. Giambruno and M. Zaicev, 
\newblock On codimension growth of finitely generated associative algebras,
\newblock \emph{Adv. Math.}, \textbf{140} (1998),  145--155.


\bibitem{r6} (MR1680198 (2000a:16048))
\newblock A. Giambruno and M. Zaicev, 
\newblock Exponential codimension growth of PI algebras: an exact estimate,
\newblock \emph{Adv. Math.}, \textbf{142} (1999),  221--243.

\bibitem{r16}(MR2176105 (2006g:16054))
\newblock A. Giambruno and M. Zaicev, 
\newblock \emph{Polynomial identities and asymptotic methods},
\newblock Mathematical Surveys and Monographs, 122. American Mathematical Society, Providence, RI,
 2005.

\bibitem{r9} (MR2901077)
\newblock A. Giambruno and M. Zaicev, 
\newblock On codimension growth of finite-dimensional Lie superalgebras,
\newblock \emph{J. Lond. Math. Soc. (2)}, \textbf{85} (2012),  534--548.

\bibitem{r4} (MR1101331)
\newblock S.P. Mishchenko, 
\newblock Growth of varieties of Lie algebras (Russian),
\newblock \emph{ Uspekhi Mat. Nauk}, \textbf{45} (1990), 25/45, 189; Engl. translation in \emph{Russian Math. Surveys}, \textbf{45} 
(1990), 27--52.

\bibitem{r1} (MR0314893)
\newblock A. Regev,
\newblock Existence of identities in $A\otimes B$,
\newblock \emph{Israel J. Math.}, \textbf{11} (1972), 131--152.

\bibitem{r12} (MR3360850)
\newblock D. Repov\v s and M. Zaicev, 
\newblock Numerical invariants of identities of unital algebras,
\newblock \emph{Comm. Algebra}, \textbf{43} (2015), 3823--3839. 

\bibitem{r3} (MR1767987)
\newblock M.V. Zaicev,
\newblock Varieties and identities of affine Kac-Moody algebras,
\newblock in \emph{Methods in ring theory (Levico Terme, 1997)},  Lecture Notes in  Pure and Appl. Math.,
198, Marcel Dekker, New York, (1998), 303--314.

\bibitem{r7} (MR1921808 (2003g:17004))
\newblock M.V. Zaitsev, 
\newblock Integrality of exponents of growth of identities of finite dimensional 
Lie algebras (Russian),
\newblock \emph{Izv. Ross. Akad. Nauk Ser. Mat.}, \textbf{66} (2002),
23--48; Engl. translation in \emph{Izv. Math.}, \textbf{66} (2002), 63--487.

\bibitem{r18} (MR3260134)
\newblock M.V. Zaicev, 
\newblock On existence of PI-exponents of codimension growth,
\newblock \emph{Electron. Res. Announc. Math. Sci.}, \textbf{21} (2014), 113--119.

\bibitem{r13} (MR3438565)
\newblock M. Zaitsev and  D. Repovsh,
\newblock  Exponential codimension growth of identities of algebras with unity,
\newblock \emph{Mat. Sb.}, \textbf{206} (2015), 103--126; Engl. translation in \emph{Sb. Math.},
\textbf{206} (2015), 1440--1462.

\bibitem{r11} (MR2907408)
\newblock M.V. Zaitsev, 
\newblock Identities of unitary finite-dimensional algebras (Russian)
\newblock \emph{Algebra i Logika}, \textbf{50} (2011), 563--594; Engl. 
translation in \emph{Algebra Logic}, \textbf{50} (2011), 563--594, 693, 695.

\bibitem{r17} (MR3748428)
\newblock M.V. Zaitsev, 
\newblock Codimension growth of metabelian algebras (Russian)
\newblock \emph{Translation of Vestnik Moskov. Univ. Ser. I Mat. Mekh.}, 2017, no 6, 15--20; 
Engl. translation in  \emph{Moscow Univ. Math. Bull.}, \textbf{72} (2017), 233--237.

\end{thebibliography}
\end{document}